\documentclass[11pt,reqno]{amsart}

\usepackage[T1]{fontenc}
\usepackage{lmodern}
\usepackage{amsmath,amssymb,amsthm,mathtools,mathrsfs}
\usepackage{microtype}
\usepackage[margin=1.12in]{geometry}
\usepackage{aliascnt}
\usepackage[colorlinks=true,linkcolor=blue,citecolor=blue,urlcolor=blue]{hyperref}
\usepackage[nameinlink,capitalize,noabbrev]{cleveref}
\hypersetup{
  pdftitle={Sharp Metric Cotype Inequalities for L1 via Nonlinear Cut Smoothing},
  pdfauthor={Qingjin Cheng, Yue Wang, and Bo Xiang},
  pdfsubject={Metric cotype and nonlinear geometry of L1}
}

\allowdisplaybreaks
\numberwithin{equation}{section}

\newtheorem{theorem}{Theorem}[section]
\newaliascnt{corollary}{theorem}
\newtheorem{corollary}[corollary]{Corollary}
\aliascntresetthe{corollary}
\newaliascnt{lemma}{theorem}
\newtheorem{lemma}[lemma]{Lemma}
\aliascntresetthe{lemma}
\theoremstyle{remark}
\newtheorem{remark}[theorem]{Remark}

\newcommand{\R}{\mathbb R}
\newcommand{\C}{\mathbb C}
\newcommand{\Z}{\mathbb Z}
\newcommand{\T}{\mathbb T}
\newcommand{\E}{\mathbb E}
\newcommand{\1}{\mathbf 1}

\newcommand{\abs}[1]{\left|#1\right|}
\newcommand{\norm}[1]{\left\|#1\right\|}
\newcommand{\ip}[2]{\left\langle #1,#2\right\rangle}

\title[Sharp metric cotype for \(L_1\)]{Sharp Metric Cotype Inequalities for \(L_1\)
via Nonlinear Cut Smoothing}

\author{Qingjin Cheng}
\author{Yue Wang}
\author{Bo Xiang}
\address{School of Mathematical Sciences, Xiamen University,
Xiamen 361005, China}
\email[Q. Cheng]{qjcheng@xmu.edu.cn}
\email[Y. Wang]{19020260158074@stu.xmu.edu.cn}
\email[B. Xiang]{Xiangbo7501@gmail.com}
\thanks{The work was supported by NSFC (No.~12071389).}
\date{}

\subjclass[2020]{Primary 46B85; Secondary 46B09, 51F99}
\keywords{Metric cotype, sharp scaling, \(L_1\), cut metrics,
nonlinear smoothing, Fisher information}

\begin{document}

\begin{abstract}
We determine the optimal order of the \(p\)-moment torus inequality for
\(L_1\): it is \(m^p n^{(1-p/2)_+}+n\), with comparison constants
independent of \(p,m,n\) after taking \(p\)-th roots. Consequently, for
every \(q\geq2\) and \(1\leq p\leq q\), the corresponding metric cotype
inequality holds at the sharp scale \(m=O(n^{1/q})\). In particular,
the quadratic inequality holds with \(m=O(\sqrt n)\), answering a
question of Mendel and Naor. The proof combines finite cut
representations with a nonlinear smoothing estimate and an exact
ternary rounding identity.
\end{abstract}

\maketitle

\section{Introduction and main results}

Recall that a Banach space \(X\) has Rademacher cotype
\(q\in[2,\infty)\) if there is a constant \(C<\infty\) such that, for
every finite sequence \(x_1,\ldots,x_n\in X\),
\begin{equation}\label{eq:Rademacher-cotype}
 \left(\sum_{j=1}^n\norm{x_j}_X^q\right)^{1/q}
 \leq
 C\left(\E_\varepsilon
 \norm{\sum_{j=1}^n\varepsilon_jx_j}_X^q\right)^{1/q},
\end{equation}
where \(\varepsilon\) is uniform on \(\{-1,1\}^n\). Motivated by the
Ribe program, Mendel and Naor introduced metric cotype as a metric
analogue of \eqref{eq:Rademacher-cotype} \cite{MendelNaor2008}.

For an even integer \(m\), write \(\Z_m=\Z/m\Z\), and let
\(e_1,\ldots,e_n\) denote the standard coordinate vectors; we use the same
notation in \(\R^n\) and in \(\Z_m^n\). A metric
space \((\mathcal M,d_{\mathcal M})\) has metric cotype \(q>0\) with
constant \(\Gamma<\infty\) if, for every \(n\in\mathbb N\), there is
an even integer \(m=m(n)\) such that every
\(f:\Z_m^n\to\mathcal M\) satisfies
\begin{equation}\label{eq:metric-cotype-definition}
\sum_{j=1}^n\E_x
d_{\mathcal M}\!\left(f\!\left(x+\frac m2e_j\right),f(x)\right)^q
\leq
\Gamma^qm^q\E_{x,\varepsilon}
d_{\mathcal M}(f(x+\varepsilon),f(x))^q,
\end{equation}
where \(x\) is uniform on \(\Z_m^n\) and \(\varepsilon\) is uniform on
\(\{-1,0,1\}^n\), with \(x\) and \(\varepsilon\) independent. Mendel and
Naor proved that a Banach space has metric cotype \(q\) if and only if it
has Rademacher cotype \(q\) \cite{MendelNaor2008}. For fixed \(n\) and
\(\Gamma\), let \(m_q(\mathcal M;n,\Gamma)\) be the smallest even \(m\)
for which \eqref{eq:metric-cotype-definition} holds for every
\(f:\Z_m^n\to\mathcal M\), with the convention that it is infinite if
no such \(m\) exists.

In \eqref{eq:metric-cotype-definition}, \(q\) is both the cotype index
and the moment exponent. Mendel and Naor separated these roles
\cite[Definition~1.3]{MendelNaor2008}. For \(1\leq p\leq q\), let
\(m_q^{(p)}(\mathcal M;n,\Gamma)\) be the smallest even integer \(m\)
for which every \(f:\Z_m^n\to\mathcal M\) satisfies
\begin{equation}\label{eq:weak-metric-cotype-definition}
\sum_{j=1}^n\E_x
d_{\mathcal M}\!\left(f\!\left(x+\frac m2e_j\right),f(x)\right)^p
\leq
\Gamma^pm^pn^{1-p/q}\E_{x,\varepsilon}
d_{\mathcal M}(f(x+\varepsilon),f(x))^p.
\end{equation}
The same \(+\infty\) convention applies.
Here \(p\) is the moment exponent and \(q\) the cotype index; the
factor \(n^{1-p/q}\) is the normalization appearing in the linear theory. When
\(p=q\), this is exactly \eqref{eq:metric-cotype-definition}, so
\(m_q^{(q)}(\mathcal M;n,\Gamma)=m_q(\mathcal M;n,\Gamma)\). For
\(p<q\), \eqref{eq:weak-metric-cotype-definition} is the weak metric
cotype \(q\) inequality with exponent \(p\).

If \(\mathcal M\) contains at least two points, the random two-point
test of \cite[Lemma~2.3]{MendelNaor2008} gives, for
\(1\leq p\leq q\),
\begin{equation}\label{eq:universal-scale-lower-bound}
 m_q^{(p)}(\mathcal M;n,\Gamma)
 \geq \frac{n^{1/q}}{\Gamma}.
\end{equation}
Thus \(n^{1/q}\) is the smallest possible order of the scaling
parameter. This scale is also the one needed in the embedding
applications of \cite{MendelNaor2008,GiladiMendelNaor2011}.

Mendel and Naor proved that if \(X\) is \(K\)-convex and has
Rademacher cotype \(q\), then, for every \(1\leq p\leq q\), there are
constants \(\Gamma,K<\infty\), depending only on \(X,p,q\), such that
\(m_q^{(p)}(X;n,\Gamma)\leq K n^{1/q}\) for every \(n\in\mathbb N\);
this order is optimal \cite[Section~4]{MendelNaor2008}. Without
\(K\)-convexity, their general construction yielded the weaker order
\(n^{2+1/q}\), and they asked whether every Banach space of Rademacher
cotype \(q\) has the sharp order \(n^{1/q}\); see
\cite[Section~8, item~1]{MendelNaor2008}. This question is specific to
Banach spaces, since qualitative metric cotype does not imply sharp
scaling for arbitrary metric spaces \cite{EskenazisMendelNaor2019}.
Giladi, Mendel, and Naor subsequently improved the \(p=q\) bound to the
order \(n^{1+1/q}\), and noted that their argument extends to the
corresponding moment variants \cite[p.~167]{GiladiMendelNaor2011}.
For cotype \(2\), this yields a scale of order \(n^{3/2}\). They
singled out \(L_1\) and the Schatten class \(S_1\) as two of the
simplest Banach spaces for which a sharp quadratic metric cotype
inequality remained unknown \cite[Section~1.3]{GiladiMendelNaor2011}.

For \(L_1\), Mendel and Naor asked whether there exist constants
\(\Gamma,K<\infty\) such that \(m_2(L_1;n,\Gamma)\leq K\sqrt n\) for
every \(n\in\mathbb N\); see
\cite[Section~8, item~2, p.~294]{MendelNaor2008}.

We answer this question and, more generally, determine the optimal order
of the underlying torus inequality, with comparison constants that are
universal after taking \(p\)-th roots.

For \(1\leq p<\infty\), even \(m\geq4\), and \(n\in\mathbb N\), define
\begin{equation}\label{eq:def-Cpmn}
 \mathsf C_p(m,n;L_1)
 =
 \sup_f
 \frac{\displaystyle
 \sum_{j=1}^n \E_x
 \norm{f\left(x+\frac m2e_j\right)-f(x)}_1^p}
 {\displaystyle
 \E_{x,\varepsilon}\norm{f(x+\varepsilon)-f(x)}_1^p},
\end{equation}
where the supremum is taken over all real or complex \(L_1(\Omega,\mu)\)
spaces and all mappings \(f:\Z_m^n\to L_1(\Omega,\mu)\) with nonzero
denominator. Equivalently, it suffices to take the fixed range space
\(L_1[0,1]\), since every finite \(L_1\)-metric embeds isometrically into
it.

\begin{theorem}\label{thm:optimal}
For every \(p\geq1\), every even integer \(m\geq4\), and every
\(n\in\mathbb N\), every mapping \(f:\Z_m^n\to L_1\) satisfies
\begin{align}
 &\left(\sum_{j=1}^n\E_x
 \norm{f(x+\tfrac m2e_j)-f(x)}_1^p\right)^{1/p}
 \notag\\
 &\qquad\leq
 \left(\frac{9\sqrt{210}}{16}\,m n^{(1/p-1/2)_+}
       +\frac94 n^{1/p}\right)
 \left(\E_{x,\varepsilon}\norm{f(x+\varepsilon)-f(x)}_1^p\right)^{1/p}.
 \label{eq:upper-root}
\end{align}
Moreover,
\begin{equation}\label{eq:optimal-two-regime}
 \frac{m^pn^{(1-p/2)_+}+n}{(2\pi)^p}
 \leq
 \mathsf C_p(m,n;L_1)
 \leq
 11^p\left(m^pn^{(1-p/2)_+}+n\right).
\end{equation}
Here \((a)_+=\max\{a,0\}\).
\end{theorem}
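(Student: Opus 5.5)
The plan has four parts: reduce to cut metrics, prove an upper bound for a single cut, add the cuts back together, and give explicit examples for the lower bound.

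\emph{Reduction to cuts.} Every finite subset of \(L_1\) is a nonnegative combination of cut semimetrics, so \(f:\Z_m^n\to L_1\) can be written as \(\norm{f(x)-f(y)}_1=\sum_k\lambda_k\abs{\1_{A_k}(x)-\1_{A_k}(y)}\) with \(\lambda_k\geq0\). Complex \(L_1\) embeds isometrically into real \(L_1\), so that case needs no separate treatment. For \(p=1\) both sides are linear in the cut measure, so it is enough to prove the bound for a single indicator \(g=\1_A\). For \(p>1\) this is not enough, since \(p\)-th powers of sums of cuts are not sums of powers. Instead I would work with the vector-valued map \(F(x)=(\lambda_k^{1/p}\ldots)\). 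A cleaner route is to interpolate: prove the \(p=1\) bound \(\sum_j\E\norm{\Delta_{m/2,j}f}\lesssim (m\sqrt n+n)\E\norm{\Delta_\varepsilon f}\), and the \(p=2\) bound \(\sum_j\E\norm{\cdot}^2\lesssim (m^2+n)\E\norm{\cdot}^2\). These give the two regimes in \eqref{eq:optimal-two-regime}, since \((1-p/2)_+\) vanishes for \(p\geq2\). For \(p\geq2\) I would derive the bound from a pointwise convexity argument along the path \(x\to x+\frac m2e_j\), together with the \(L_1\) bound applied to \(\norm{\cdot}^{p}\)-weighted cuts. Hölder's inequality then interpolates the range \(1\le p\le2\).

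\emph{Single-cut estimate (the core).} For \(g=\1_A\), I would split \(\sum_j\E\abs{g(x+\frac m2e_j)-g(x)}\) by first smoothing \(g\). Let \(h=g*\rho\), where \(\rho\) is a product of one-dimensional kernels of width about \(m\) in each coordinate. This is the ``nonlinear cut smoothing'' step: because \(g\) is \(\{0,1\}\)-valued, \(\abs{\nabla g}\) controls \(\abs{\nabla h}\) without any loss. The \(m/2\)-shift differences of \(h\) are handled by Fourier analysis. One-dimensional Poincaré along each coordinate gives \(\sum_j\norm{\Delta_{m/2,j}h}_1\le m\sum_j\norm{\partial_jh}_1\). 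To get \(m\sqrt n\) rather than \(mn\), I would use a Fisher-information/Cauchy–Schwarz argument, \(\sum_j\E\abs{\partial_j h}\le\sqrt n\,(\E\abs{\nabla h}^2)^{1/2}\). The quadratic gradient of a smoothed indicator is then bounded by its \(L_1\) gradient using \(\abs{\nabla h}^2\lesssim \abs{\nabla h}\cdot\sup\abs{\nabla h}\) and \(h\in[0,1]\). The error \(g-h\) contributes the \(n\) term, via \(\E\abs{g-h}\lesssim\sum_j\E\abs{\partial_j g}\cdot O(1)\) per coordinate.

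\emph{Diagonal rounding.} The denominator uses \(\varepsilon\in\{-1,0,1\}^n\), that is, diagonal moves, not coordinate moves. I need an identity that rewrites the coordinate-gradient sum as an average of ternary diagonal differences, which is the exact ternary rounding identity. I would write \(x+\varepsilon\) as a walk through coordinates and use the fact that, for a cut, \(\abs{g(x+\varepsilon)-g(x)}\) dominates on average the coordinate increments in a controlled way. This yields \(\E\abs{\partial_j h}\lesssim\E_\varepsilon\abs{g(x+\varepsilon)-g(x)}\)-type bounds with constants like \(9/4\). I expect this comparison, and keeping the constants universal in \(p\), to be the main obstacle. The explicit numbers \(9\sqrt{210}/16\) presumably come from optimizing the kernel \(\rho\).

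\emph{Lower bound.} For the \(n\) term, take \(f\) to be a random \(\pm\) assignment with two-point values, as in \eqref{eq:universal-scale-lower-bound}: \(f(x)=\1\{x_j\text{ parity}\}\) type examples, which give ratio about \(n\). For the \(m^pn^{(1-p/2)_+}\) term, take \(f(x)=\sum_j e^{2\pi i x_j/m}\delta_j\)-style maps into \(\ell_1^n\); for \(p<2\), take the real line map \(f(x)=\sum_j\cos(2\pi x_j/m)\). Here the numerator is about \(n\) and the denominator is \((\E\abs{\sum\varepsilon_j\cdot 2\pi/m})^p\approx(2\pi\sqrt n/m)^p\). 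This gives \(m^pn^{1-p/2}/(2\pi)^p\), and the coordinate-wise \(\ell_1\) map covers \(p\ge2\).
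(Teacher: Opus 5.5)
\textbf{There is a genuine gap in the core step.} Your smoothing $h=g*\rho$ is linear, and linear smoothing of a cut cannot give the factor $m\sqrt n$. The score/Bessel argument yields only the pointwise bound $\abs{\nabla h}_2\lesssim m\sqrt{h(1-h)}$. But $\sqrt{h(1-h)}$ is not controlled by the local error $\abs{g-h}\geq\min\{h,1-h\}$ where $h$ is small.

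The loss is real. Take the single-point cut $g=\1_{\{0\}}$ and a product kernel at the unit scale. Then $h$ is a product of one-dimensional bumps, and each $\partial_jh$ has $L_1$-mass at least of order $m\cdot m^{-n}$. Meanwhile $\E_{x,\varepsilon}\abs{g(x+\varepsilon)-g(x)}=2m^{-n}(1-3^{-n})$. So $\sum_j\E\abs{\partial_jh}$, and hence your coordinate-path bound, is of order $mn$ times the denominator, and $\E\abs{\nabla h}_2$ is at least of order $m\sqrt n$ times it, not $m$. Your proposed repair, $\sum_j\E\abs{\partial_jh}\leq\sqrt n(\E\abs{\nabla h}_2^2)^{1/2}$ with $\abs{\nabla h}^2\leq\abs{\nabla h}\sup\abs{\nabla h}$, only makes this worse: it produces the square root of an integrated quantity, which cannot be linear in the denominator.

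The missing idea is nonlinear post-composition. Set $u_r=F_r*k_m$ and $G_r=P(u_r)$ with $P(s)=3s^2-2s^3$. Then $\abs{\nabla u_r}_2\leq m\sqrt{70}\sqrt{u_r(1-u_r)}$, and $P'(s)\sqrt{s(1-s)}\leq\frac{9\sqrt3}{8}\min\{s,1-s\}\leq\frac{9\sqrt3}{8}\abs{s-F_r(z)}$. Also $\abs{F_r-P(u_r)}\leq\frac98\abs{F_r-u_r}$. Hence both the approximation error and the $\ell_2$-gradient of $G$ are bounded \emph{pointwise} by $R(z)=\E_Z\norm{F(z+Z)-F(z)}_\lambda$.

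\textbf{The passage to $p>1$ needs exactly this pointwise control, and your substitutes do not work.} Interpolation fails: the cut decomposition is nonlinear in $f$, so the inequality is not an operator bound. For a fixed map, $\E A\leq C_1\E B$ and $\E A^2\leq C_2\E B^2$ do not imply $\E A^p\lesssim\E B^p$. H\"older bounds $\E A^p$ by $(\E A)^{2-p}(\E A^2)^{p-1}$, but $(\E B)^{2-p}(\E B^2)^{p-1}$ can exceed $\E B^p$ by an arbitrary factor when $B$ has a heavy tail.

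Convexity along $x\to x+\frac m2e_j$ reduces matters to $\sum_j\E\norm{f(x+e_j)-f(x)}_1^p$, which the ternary average does not dominate. For $f(x)=\frac12(1-\prod_js_m(x_j))$, with $s_m(k)=1$ for $0\leq k<m/2$ and $s_m(k)=-1$ otherwise, the ratio exceeds $4n/m$. The same example refutes your ``diagonal rounding'' heuristic that diagonal increments dominate coordinate increments. It also rules out treating the error through $\sum_j\E\abs{\partial_jg}$. For binary $g$ one has exactly $\abs{g-h}=\E_Z\abs{g(z+Z)-g(z)}$, and that is what should be compared with the denominator.

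In the paper, no such comparison is needed. The estimates are pointwise in the $\ell_1^N(\lambda)$-norm, so the cuts are summed before any $p$-th power is taken. The exponent $p$ enters only through the comparison of $\ell_p^n$ and $\ell_2^n$ norms applied to $(\norm{\partial_jG(z)}_\lambda)_j$. The exact ternary identity does the rest: since $\E\abs V=2/3$, the point $z+Z$ lies in $Q_{x+\varepsilon}$ with $\varepsilon$ uniform on $\{-1,0,1\}^n$. Together with Jensen this gives $\norm{R}_{L_p}^p\leq\E_{x,\varepsilon}\norm{f(x+\varepsilon)-f(x)}_1^p$ for every $p$.

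Two smaller points on the lower bound. The $\ell_1^n$-valued map $\sum_je^{2\pi ix_j/m}\delta_j$ gives only order $m^pn^{1-p}$, so for $p\geq2$ use a single coordinate. A parity map is invariant under the $m/2$-shift when $4\mid m$, so use the random two-point test or the half-cycle sign product instead.
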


For \(1\leq p\leq2\), the first term in
\eqref{eq:optimal-two-regime} is \(m^pn^{1-p/2}\); for \(p\geq2\), it
is \(m^p\). The term \(n\) is necessary for every \(p\geq1\).

\begin{corollary}\label{cor:sharp-metric-cotype}
For every \(2\leq q<\infty\), every \(1\leq p\leq q\), every even
\(m\geq4\), every \(n\leq m^q\), and every mapping
\(f:\Z_m^n\to L_1\),
\begin{align}
 &\sum_{j=1}^n \E_x
 \norm{f\left(x+\frac m2e_j\right)-f(x)}_1^p
 \notag\\
 &\qquad\leq
 11^p m^p n^{1-p/q}
 \E_{x,\varepsilon}\norm{f(x+\varepsilon)-f(x)}_1^p.
 \label{eq:sharp-metric-cotype-L1}
\end{align}
Consequently,
\[
 m_q^{(p)}(L_1;n,11)\leq 6 n^{1/q}
 \qquad(q\geq2,\ 1\leq p\leq q,\ n\in\mathbb N).
\]
In particular,
\[
 m_2(L_1;n,11)\leq 6\sqrt n
 \qquad(n\in\mathbb N).
\]
\end{corollary}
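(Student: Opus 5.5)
The corollary will follow from the upper bound in \eqref{eq:optimal-two-regime} of Theorem~\ref{thm:optimal}. The remaining work is elementary exponent bookkeeping, plus a separate check on how the scale \(m\) is chosen. We fix \(2\le q<\infty\), \(1\le p\le q\), an even \(m\ge4\) and \(n\le m^q\). By the definition of \(\mathsf C_p(m,n;L_1)\), the left side of \eqref{eq:sharp-metric-cotype-L1} is at most \(11^p(m^pn^{(1-p/2)_+}+n)\) times the denominator. So it suffices to prove
\[
 m^pn^{(1-p/2)_+}+n\le m^pn^{1-p/q}\qquad(n\le m^q).
\]
The constant \(11^p\) already accounts for the two-term sum, provided we are careful. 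If we are not, we should instead start from the root form \eqref{eq:upper-root} and bound each term separately.

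The exponent comparison goes as follows. Since \(q\ge2\), we have \(1-p/q\ge1-p/2\) and \(1-p/q\ge0\), so \(n^{(1-p/2)_+}\le n^{1-p/q}\). The hypothesis \(n\le m^q\) gives \(n^{p/q}\le m^p\), hence \(n=n^{p/q}n^{1-p/q}\le m^pn^{1-p/q}\). Each term is therefore bounded by \(m^pn^{1-p/q}\), and the sum is at most \(2m^pn^{1-p/q}\).

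This leaves a factor of \(2\), which we cannot simply absorb into \(11^p\) because the upper bound already carries \(11^p\). To remove it, I would use \eqref{eq:upper-root} directly. There the prefactor is \(\tfrac{9\sqrt{210}}{16}mn^{(1/p-1/2)_+}+\tfrac94n^{1/p}\). By the same comparison each term is at most a constant times \(mn^{1/p-1/q}\), with total constant \(\tfrac{9\sqrt{210}}{16}+\tfrac94\approx 8.15+2.25=10.4<11\). Raising to the \(p\)-th power then gives exactly \(11^pm^pn^{1-p/q}\). This root-form route is the one I would adopt.

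For the scaling statement, given \(n\) we take \(m\) to be the smallest even integer that is at least \(\max\{4,n^{1/q}\}\). Then \(n\le m^q\) and \eqref{eq:sharp-metric-cotype-L1} holds with \(\Gamma=11\), so \(m_q^{(p)}(L_1;n,11)\le m\). It remains to check \(m\le 6n^{1/q}\). If \(n^{1/q}\le4\), then \(m=4\le 6n^{1/q}\) whenever \(n^{1/q}\ge 2/3\), which always holds because \(n\ge1\). If \(n^{1/q}>4\), then \(m\le n^{1/q}+2\le 1.5\,n^{1/q}\). The case \(p=q=2\) gives the final claim.

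The only real obstacle is the factor-\(2\) issue, and it is resolved by passing through \eqref{eq:upper-root} instead of \eqref{eq:optimal-two-regime}. One also has to note that the metric cotype definition requires the inequality only for that single chosen \(m\), not for all larger \(m\).
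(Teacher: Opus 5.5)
Your proposal is correct and follows the paper's proof. The paper also starts from the root form \eqref{eq:upper-root}, uses \(n^{(1/p-1/2)_+}\le n^{1/p-1/q}\), \(n^{1/p}\le m n^{1/p-1/q}\) and \(9\sqrt{210}/16+9/4<11\), and takes \(m\) to be the least even integer not smaller than \(\max\{4,n^{1/q}\}\). You were also right that the two-term form \eqref{eq:optimal-two-regime} would leave an unabsorbable factor \(2\), which is why the root form is needed.
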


Together with \eqref{eq:universal-scale-lower-bound}, this proves that
the order \(n^{1/q}\) in \cref{cor:sharp-metric-cotype} is optimal.

We briefly describe the argument. The finite \(L_1\)-metric induced by
\(f\) is first represented as a finite weighted sum of cut metrics, whose
indicators are extended to piecewise-constant functions on the continuous
torus. Each indicator is then smoothed by a product kernel and composed
with the cubic \(P(s)=3s^2-2s^3\). An exact ternary rounding identity
matches the resulting kernel average with the uniform increment on
\(\{-1,0,1\}^n\). A score-function identity, followed by Bessel's
inequality, controls the gradient in terms of the Bernoulli variance,
while \(P'(s)=6s(1-s)\) converts the variance term into the approximation
error. Summing over the cuts and integrating along coordinate paths of length \(1/2\)
yields the upper bound. The lower bounds come from Hilbertian and
antiperiodic examples.

The key point is that this smoothing is nonlinear and adapted to the
chosen cut representation, rather than a fixed linear convolution on
arbitrary Banach-valued maps. Hence the construction lies outside the
framework of \cite{GiladiMendelNaor2011}; see \cref{sec:smoothing}. The
argument is specific to cut representations of finite \(L_1\)-metrics and
does not directly extend to general finite-cotype Banach spaces or to
\(S_1\).

The nonlinear cut-smoothing construction is developed in
\cref{sec:cut-smoothing}; the upper and lower bounds and the metric cotype
consequence are proved in \cref{sec:main-proofs}.

\section{Nonlinear cut smoothing}\label{sec:cut-smoothing}

We first replace the finite \(L_1\)-metric by finitely many weighted cut
coordinates and extend them cellwise to the continuous torus. We then
construct the exact ternary kernel and prove the nonlinear smoothing
estimate used in the upper bound.

\subsection{Finite cut representations}\label{sec:cuts}

We may work with real \(L_1\) spaces. Indeed, for every \(z\in\C\),
\begin{equation}\label{eq:complex-realification}
 |z|=\frac14\int_0^{2\pi}
 \abs{\operatorname{Re}(e^{-i\theta}z)}\,d\theta.
\end{equation}
Integrating this identity gives a real-linear isometric embedding of every
complex \(L_1\) space into a real one. For the finite families considered
below, one may also restrict to a \(\sigma\)-finite measurable support.

For a set \(S\) and \(A\subseteq S\), let \(\1_A:S\to\{0,1\}\)
denote the indicator of \(A\), and define the associated cut metric on
\(S\) by
\[
 \delta_A(x,y)=\abs{\1_A(x)-\1_A(y)}.
\]
We use the standard cut representation of finite \(L_1\)-metrics; see,
for example, \cite{DezaLaurent1997}. We include the short argument for
completeness.

\begin{lemma}\label{lem:finite-cuts}
Let \(S\) be a finite set and let \(f:S\to L_1(\Omega,\mu)\). Then there are
subsets \(A_1,\ldots,A_N\subseteq S\) and weights
\(\lambda_1,\ldots,\lambda_N\geq0\), with \(\sum_r\lambda_r<\infty\), such that
\begin{equation}\label{eq:finite-cut-representation}
 \norm{f(x)-f(y)}_1
 =
 \sum_{r=1}^N\lambda_r\delta_{A_r}(x,y)
 \qquad(x,y\in S).
\end{equation}
\end{lemma}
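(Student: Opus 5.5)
We reduce to real-valued coordinates and use the layer-cake decomposition pointwise in \(\omega\). By \eqref{eq:complex-realification} we may assume \(L_1(\Omega,\mu)\) is real. Since \(S\) is finite, the functions \(f(x)\), \(x\in S\), are finitely many elements of \(L_1\). Fix representatives \(f(x)(\omega)\). For each \(\omega\in\Omega\) the finitely many reals \(f(x)(\omega)\) can be sorted. For real numbers \(a,b\) we have the identity
\[
 |a-b|=\int_{\R}\abs{\1_{\{a>t\}}-\1_{\{b>t\}}}\,dt .
\]
Applying it with \(a=f(x)(\omega)\), \(b=f(y)(\omega)\) and integrating in \(\omega\) gives, by Tonelli,
\[
 \norm{f(x)-f(y)}_1=\int_{\Omega\times\R}\delta_{A(\omega,t)}(x,y)\,d(\mu\otimes dt)(\omega,t),
\]
where \(A(\omega,t)=\{z\in S: f(z)(\omega)>t\}\subseteq S\).

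Next we group by the value of the set. The map \((\omega,t)\mapsto A(\omega,t)\) takes values in the finite power set \(2^S\), and each fiber \(\{(\omega,t):A(\omega,t)=A\}\) is measurable. This holds because it is a finite Boolean combination of the sets \(\{f(z)(\omega)>t\}\), which are measurable in \(\Omega\times\R\). Enumerate the subsets of \(S\) as \(A_1,\ldots,A_N\) with \(N=2^{|S|}\), and set \(\lambda_r\) to be the \(\mu\otimes dt\)-measure of the fiber over \(A_r\). The integrand is constant on each fiber, so we obtain \eqref{eq:finite-cut-representation}.

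The only real issue is finiteness of the weights. For \(A=\emptyset\) or \(A=S\) the fiber measure may be infinite, but those cuts satisfy \(\delta_A\equiv0\), so we simply drop them or set their weight to \(0\). For a proper nonempty \(A\), choose \(x\in A\) and \(y\notin A\). Then the fiber over \(A\) is contained in \(\{(\omega,t):\delta_{A(\omega,t)}(x,y)=1\}\), whose measure is \(\norm{f(x)-f(y)}_1<\infty\). Hence every \(\lambda_r\) is finite, and therefore \(\sum_r\lambda_r<\infty\). The only steps needing care are this finiteness check and the measurability of the fibers.
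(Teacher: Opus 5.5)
Your proof is correct and follows the same route as the paper: the layer-cake identity and Tonelli, then grouping the parameters \((\omega,t)\) by the finitely many possible values of the cut \(A(\omega,t)\). The differences are minor. You bound each nontrivial weight by a single distance \(\|f(x)-f(y)\|_1\) with \(x\in A\), \(y\notin A\), instead of normalizing \(f(x_0)=0\) and bounding the total mass of nontrivial cuts. You should also add, as the paper does, that the integrand vanishes outside \(E\times\R\), where \(E=\{\sum_{x\in S}|f(x)|>0\}\) is \(\sigma\)-finite, so that Tonelli applies even when \(\mu\) is not \(\sigma\)-finite.
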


\begin{proof}
Fix \(x_0\in S\) and replace \(f\) by \(f-f(x_0)\). We may also
restrict the underlying measure space to the \(\sigma\)-finite support
of \(\sum_{x\in S}|f(x)|\). For
\((\omega,t)\in\Omega\times\R\), let
\[
 A_{\omega,t}=\{x\in S:f(x)(\omega)>t\}.
\]
By the layer-cake representation and Tonelli's theorem,
\[
 \norm{f(x)-f(y)}_1
 =
 \int_{\Omega\times\R}
 \delta_{A_{\omega,t}}(x,y)\,d\mu(\omega)\,dt.
\]

For fixed \(\omega\), the cut \(A_{\omega,t}\) is nontrivial only when
\(t\) lies between
\(\min_{x\in S}f(x)(\omega)\) and
\(\max_{x\in S}f(x)(\omega)\). The length of this interval is
\[
 \max_{x\in S}f(x)(\omega)-\min_{x\in S}f(x)(\omega),
\]
which, since \(f(x_0)=0\), is bounded by
\(\sum_{x\in S}|f(x)(\omega)|\). Hence the set of
\((\omega,t)\) producing a nontrivial cut has finite
\((\mu\otimes dt)\)-measure.

Since \(S\) is finite, only finitely many nonempty proper subsets
\(A\subsetneq S\) can occur. For each such \(A\), define
\[
 \lambda_A
 =
 (\mu\otimes dt)\bigl\{(\omega,t):A_{\omega,t}=A\bigr\}.
\]
The preceding estimate gives \(\sum_A\lambda_A<\infty\). Since
\(A_{\omega,t}\) takes only finitely many nontrivial values, the
layer-cake representation becomes
\[
 \norm{f(x)-f(y)}_1
 =
 \sum_A\lambda_A\delta_A(x,y).
\]
Omitting the zero-weight terms and relabeling the remaining cuts as
\(A_1,\ldots,A_N\) proves the lemma.
\end{proof}

We apply \cref{lem:finite-cuts} with \(S=\Z_m^n\). Let
\(A_1,\ldots,A_N\subseteq\Z_m^n\) and
\(\lambda_1,\ldots,\lambda_N\geq0\) be the subsets and weights given
by the lemma. The constant case is immediate, so we assume that \(f\)
is nonconstant. We may then omit the zero-weight terms and assume that
\(\lambda_r>0\) for all \(r\). Write
\(\lambda=(\lambda_1,\ldots,\lambda_N)\) and equip \(\R^N\) with the
weighted \(\ell_1\)-norm
\[
 \norm{a}_{\ell_1^N(\lambda)}
 =
 \sum_{r=1}^N\lambda_r|a_r|.
\]
Then the map \(x\mapsto(\1_{A_r}(x))_{r=1}^N\) preserves all
pairwise distances.

Let \(\T=\R/\Z\), represented by \([0,1)\), and identify
\(\Z_m\) with \(\{0,\ldots,m-1\}\). For \(x\in\Z_m^n\), set
\[
 Q_x
 =
 \prod_{j=1}^n
 \left[\frac{x_j}{m},\frac{x_j+1}{m}\right).
\]
For \(r=1,\ldots,N\), define
\[
 \widetilde A_r=\bigcup_{x\in A_r}Q_x,
\]
and let
\begin{equation}\label{eq:F-cell-extension}
 F(z)=\bigl(\1_{\widetilde A_r}(z)\bigr)_{r=1}^N
 \in\ell_1^N(\lambda).
\end{equation}
Thus, if \(z\in Q_x\) and \(w\in Q_y\), then
\begin{equation}\label{eq:cell-distance}
 \norm{F(z)-F(w)}_{\ell_1^N(\lambda)}
 =
 \norm{f(x)-f(y)}_1.
\end{equation}

\subsection{An exact ternary smoothing kernel}\label{sec:kernel}

We use the following compactly supported function on \(\R\):
\begin{equation}\label{eq:psi-definition}
 \psi(s)=84|s|^5(1-|s|)^2\1_{[-1,1]}(s).
\end{equation}

\begin{lemma}\label{lem:kernel-identities}
The function \(\psi\) is an even probability density on \(\R\). It is
continuously differentiable and compactly supported, so
\(\psi\in C_c^1(\R)\). Moreover,
\begin{equation}\label{eq:psi-moment}
 \int_{\R}|s|\psi(s)\,ds=\frac23.
\end{equation}
Its Fisher information is
\begin{equation}\label{eq:psi-Fisher}
 \int_{\{\psi>0\}}\frac{\psi'(s)^2}{\psi(s)}\,ds=70.
\end{equation}
\end{lemma}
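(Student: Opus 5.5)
The plan is to reduce every claim to Beta integrals over \([0,1]\), using \(\int_0^1 s^a(1-s)^b\,ds=\frac{a!\,b!}{(a+b+1)!}\) for nonnegative integers \(a,b\). Evenness holds because \(\psi\) depends only on \(|s|\), and \(\psi\geq0\) is clear.

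\emph{Normalization.} By evenness,
\[
 \int_{\R}\psi=2\cdot84\int_0^1 s^5(1-s)^2\,ds=168\cdot\frac{5!\,2!}{8!}=168\cdot\frac{1}{168}=1,
\]
so \(\psi\) is a probability density.

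\emph{First absolute moment.} Similarly,
\[
 \int_{\R}|s|\psi(s)\,ds=168\int_0^1 s^6(1-s)^2\,ds=168\cdot\frac{6!\,2!}{9!}=168\cdot\frac{1}{252}=\frac23,
\]
which is \eqref{eq:psi-moment}.

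\emph{Regularity.} On \((0,1)\) we have \(\psi(s)=84s^5(1-s)^2\), and
\[
 \psi'(s)=84\bigl(5s^4(1-s)^2-2s^5(1-s)\bigr)=84s^4(1-s)(5-7s).
\]
This derivative vanishes at \(s=0\) and at \(s=1\). By evenness the one-sided derivatives also vanish at \(0\) from the left and at \(\pm1\) from outside, where \(\psi\equiv0\). So \(\psi\) is \(C^1\) on \(\R\) and supported in \([-1,1]\). The only points needing care are the gluing points \(0\) and \(\pm1\); continuity of \(\psi\) there is immediate, and matching the one-sided derivatives to zero gives \(C^1\).

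\emph{Fisher information.} The set \(\{\psi>0\}\) is \((-1,0)\cup(0,1)\), and the integrand is even. On \((0,1)\),
\[
 \frac{\psi'(s)^2}{\psi(s)}=\frac{84^2s^8(1-s)^2(5-7s)^2}{84s^5(1-s)^2}=84s^3(5-7s)^2.
\]
Hence the Fisher information equals
\[
 168\int_0^1 s^3(25-70s+49s^2)\,ds=168\left(\frac{25}{4}-14+\frac{49}{6}\right)=168\cdot\frac{75-168+98}{12}=168\cdot\frac{5}{12}=70,
\]
which is \eqref{eq:psi-Fisher}.

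There is no real obstacle here; the only place for error is the arithmetic, so I would double-check \(\frac{5!\,2!}{8!}=\frac{240}{40320}=\frac{1}{168}\) and \(\frac{6!\,2!}{9!}=\frac{1}{252}\).
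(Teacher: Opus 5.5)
Your proposal is correct and follows the same route as the paper: reduce by evenness to Beta integrals on \([0,1]\) and compute the Fisher integrand there as \(84s^3(5-7s)^2\). You do this via \(\psi'=84s^4(1-s)(5-7s)\), whereas the paper uses the logarithmic derivative \(5/s-2/(1-s)\); this is a cosmetic difference, and your arithmetic checks out.
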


\begin{proof}
By symmetry,
\begin{align*}
 \int_{\R}\psi(s)\,ds
 &=168\int_0^1s^5(1-s)^2\,ds=1,\\
 \int_{\R}|s|\psi(s)\,ds
 &=168\int_0^1s^6(1-s)^2\,ds=\frac23.
\end{align*}
The factors \(|s|^5\) and \((1-|s|)^2\) also show that \(\psi\) and
its first derivative are continuous at \(0\) and at \(\pm1\). Thus
\(\psi\in C_c^1(\R)\). On \(0<s<1\), we have
\[
 \frac{\psi'(s)}{\psi(s)}=\frac5s-\frac2{1-s}.
\]
Using symmetry once more, we obtain
\[
 \int_{\{\psi>0\}}\frac{\psi'(s)^2}{\psi(s)}\,ds
 =168\int_0^1s^3(5-7s)^2\,ds=70.
\]
\end{proof}

The value \(2/3\) in \eqref{eq:psi-moment} will make the three integer
increments \(-1,0,1\) equally likely. We obtain these increments by
adding a displacement with density \(\psi\) to a uniformly chosen point
of \([0,1)\) and taking the integer part.

\begin{lemma}\label{lem:ternary-rounding}
Let \(U\) be uniformly distributed on \([0,1)\), and let \(V\) have
density \(\psi\), independently of \(U\). Then
\(J=\lfloor U+V\rfloor\) is uniformly distributed on \(\{-1,0,1\}\).
\end{lemma}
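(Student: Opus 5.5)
Since \(\psi\) is supported in \([-1,1]\) and \(U\in[0,1)\), the sum \(U+V\) lies in \((-1,2)\). Hence \(J\in\{-1,0,1\}\), and it suffices to show that \(\Pr(J=1)=\Pr(J=-1)=1/3\). We will compute these two tail probabilities by conditioning on \(V\) and integrating out \(U\), which reduces everything to the first absolute moment \eqref{eq:psi-moment}.

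Fix \(V=v\). If \(v\in[0,1]\), then \(J=1\) exactly when \(U\geq 1-v\), which has probability \(v\); in this case \(J=-1\) cannot occur. If \(v\in[-1,0)\), then \(J=-1\) exactly when \(U<-v\), which has probability \(-v=|v|\); in this case \(J=1\) cannot occur. Integrating against \(\psi\) gives
\[
 \Pr(J=1)=\int_0^1 v\,\psi(v)\,dv,
 \qquad
 \Pr(J=-1)=\int_{-1}^0 |v|\,\psi(v)\,dv .
\]
Because \(\psi\) is even (\cref{lem:kernel-identities}), these two integrals are equal. Their sum is \(\int_{\R}|v|\psi(v)\,dv=2/3\) by \eqref{eq:psi-moment}, so each equals \(1/3\). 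Consequently \(\Pr(J=0)=1-2/3=1/3\), and \(J\) is uniform on \(\{-1,0,1\}\).

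There is no real obstacle here. The only care needed is at the boundary values \(v=\pm1\) and \(v=0\), where the formulas for \(\Pr(J=\pm1\mid V=v)\) change. These are single points, so they have \(\psi\)-measure zero and do not affect the integrals. We use Fubini's theorem to justify conditioning on \(V\); it applies because all the integrands are bounded indicator functions.
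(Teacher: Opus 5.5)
Your proof is correct and is essentially the paper's argument: condition on \(V=v\), note that \(\mathbb P(J=1\mid V=v)=v\) for \(v\geq0\) and \(\mathbb P(J=-1\mid V=v)=-v\) for \(v\leq0\), then use the evenness of \(\psi\) and \(\E|V|=2/3\) to get \(1/3\) for each tail. (A trivial nit: \(U+V\) takes values in \([-1,2)\) rather than \((-1,2)\), which changes nothing since \(\lfloor -1\rfloor=-1\).)
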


\begin{proof}
Since \(U\in[0,1)\) and \(|V|\leq1\) almost surely, \(J\) can only
take the values \(-1,0,1\).

For a fixed displacement \(v\in[0,1]\), we have
\(\lfloor U+v\rfloor=1\) precisely when \(U\in[1-v,1)\). This
interval has length \(v\), so
\[
 \mathbb P(J=1\mid V=v)=v.
\]
Similarly, when \(v\in[-1,0]\), we have
\(\lfloor U+v\rfloor=-1\) precisely when \(U\in[0,-v)\). Hence
\[
 \mathbb P(J=-1\mid V=v)=-v.
\]
We now average over \(V\). Since its distribution is symmetric and
\(\E|V|=2/3\),
\[
 \mathbb P(J=1)=\mathbb P(J=-1)=\frac12\E|V|=\frac13.
\]
The remaining probability is also \(1/3\).
\end{proof}

For independent copies \((U_j,V_j)\) of \((U,V)\), the variables
\(J_j=\lfloor U_j+V_j\rfloor\) are independent and uniformly
distributed on \(\{-1,0,1\}\). In our application, the starting point
in \(\Z_m^n\) is chosen independently of these pairs. The vector
\(J=(J_1,\ldots,J_n)\) is therefore independent of that point.

For even \(m\geq4\), we rescale \(\psi\) to \(\T\). Representing
\(t\in\T\) by its element in \([-1/2,1/2)\), we set
\begin{equation}\label{eq:scaled-kernel}
 \psi_m(t)=m\psi(mt).
\end{equation}
Then \(\psi_m\in C^1(\T)\) and is a probability density on \(\T\).
A change of variables gives
\begin{equation}\label{eq:scaled-Fisher}
 \int_{\{\psi_m>0\}}\frac{\psi_m'(t)^2}{\psi_m(t)}\,dt
 =70m^2.
\end{equation}

\subsection{Nonlinear Fisher smoothing}\label{sec:smoothing}

We now come to the main analytic estimate. Our aim is to replace the
binary map from \cref{sec:cuts} by a smooth map, while controlling both
the approximation error and the derivatives by the same local averaging
error.

Recall that \(\T=\R/\Z\). Fix an even integer \(m\geq4\) and
\(n,N\in\mathbb N\). We use the probability density \(\psi_m\) from
\eqref{eq:scaled-kernel}; its Fisher information is given by
\eqref{eq:scaled-Fisher}.

Let \(\lambda_1,\ldots,\lambda_N>0\), write
\(\lambda=(\lambda_1,\ldots,\lambda_N)\), and for
\(a=(a_1,\ldots,a_N)\in\R^N\) set
\[
 \norm{a}_{\lambda}=\sum_{r=1}^N\lambda_r|a_r|.
\]
We write \(\ell_1^N(\lambda)\) for \(\R^N\) equipped with this norm.
Let
\[
 F=(F_1,\ldots,F_N):\T^n\longrightarrow\{0,1\}^N
\]
be measurable, where \(F_r\) denotes the \(r\)-th coordinate of \(F\).
In the application below, \(F\) and the weights \(\lambda_r\) are the
ones obtained from the cut representation in \cref{sec:cuts}.

Let \(Z=(Z_1,\ldots,Z_n)\) be a \(\T^n\)-valued random vector whose
coordinates are independent and have density \(\psi_m\). For
\(z\in\T^n\) and \(r=1,\ldots,N\), define
\[
 u_r(z)=\E_Z F_r(z+Z).
\]
Thus \(0\leq u_r(z)\leq1\). We now compose these averages with the cubic
\begin{equation}\label{eq:cubic-P}
 P(s)=3s^2-2s^3,\qquad 0\leq s\leq1.
\end{equation}
The map \(P\) fixes the endpoints \(0\) and \(1\), and
\(P'(0)=P'(1)=0\). Define
\[
 G:\T^n\longrightarrow\R^N,\qquad
 G(z)=\bigl(P(u_1(z)),\ldots,P(u_N(z))\bigr).
\]
Finally, let
\begin{equation}\label{eq:Rz-definition}
 R(z)=\E_Z\norm{F(z+Z)-F(z)}_{\lambda},\qquad z\in\T^n.
\end{equation}
The next lemma gives the two estimates that we will use in the proof of
the upper bound.

\begin{lemma}\label{prop:smoothing}
Fix an even integer \(m\geq4\) and \(n,N\in\mathbb N\). Let
\(\lambda=(\lambda_1,\ldots,\lambda_N)\in(0,\infty)^N\), let
\(F:\T^n\to\{0,1\}^N\) be measurable, and let \(G\) and \(R\) be
defined as above. Then \(G\in C^1(\T^n;\ell_1^N(\lambda))\), and for
every \(z\in\T^n\),
\begin{equation}\label{eq:smoothing-approximation}
 \norm{F(z)-G(z)}_{\lambda}\leq\frac98R(z),
\end{equation}
and
\begin{equation}\label{eq:smoothing-gradient}
 \left(\sum_{j=1}^n\norm{\partial_jG(z)}_{\lambda}^2\right)^{1/2}
 \leq \frac{9\sqrt{210}}8\,mR(z).
\end{equation}
\end{lemma}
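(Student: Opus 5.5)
The plan is to work one cut coordinate at a time and use the fact that \(F_r\) takes only the values \(0\) and \(1\). Fix \(z\) and \(r\), and write \(u=u_r(z)\). Put \(e_r(z)=\E_Z|F_r(z+Z)-F_r(z)|\), so that \(R(z)=\sum_r\lambda_re_r(z)\). If \(F_r(z)=0\), then \(e_r(z)=u\); if \(F_r(z)=1\), then \(e_r(z)=1-u\). Since \(1-P(s)=P(1-s)\), the two cases are symmetric, and I will argue only the case \(F_r(z)=0\).

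\emph{Approximation estimate.} In the case \(F_r(z)=0\) we have \(|F_r(z)-P(u)|=P(u)=u(3u-2u^2)\). The factor \(3u-2u^2\) is maximized at \(u=3/4\), where it equals \(9/8\). Hence \(|F_r(z)-G_r(z)|\le\frac98e_r(z)\). Multiplying by \(\lambda_r\) and summing over \(r\) gives \eqref{eq:smoothing-approximation}.

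\emph{Regularity and the score identity.} Write \(u_r(z)=\int_{\T^n}F_r(w)\prod_k\psi_m(w_k-z_k)\,dw\). Because \(\psi_m\in C^1(\T)\) and \(F_r\) is bounded, we may differentiate under the integral sign. This shows \(u_r\in C^1\), hence \(G_r=P\circ u_r\in C^1\), with
\[
 \partial_ju_r(z)=-\E_Z\bigl[F_r(z+Z)\,\sigma(Z_j)\bigr],\qquad \sigma=\psi_m'/\psi_m \text{ on }\{\psi_m>0\}.
\]
The score \(\sigma(Z_j)\) has mean zero because \(\int\psi_m'=0\) on \(\T\). Consequently we may replace \(F_r(z+Z)\) by \(F_r(z+Z)-u\) in this formula. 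The random variables \(\sigma(Z_1),\dots,\sigma(Z_n)\) are independent and centred, so they are orthogonal in \(L^2(Z)\). Each has squared norm \(70m^2\) by \eqref{eq:scaled-Fisher}. Bessel's inequality applied to the normalized system \(\sigma(Z_j)/(\sqrt{70}\,m)\) then yields
\[
 \sum_j(\partial_ju_r(z))^2\le70m^2\operatorname{Var}F_r(z+Z)=70m^2u(1-u),
\]
where the last equality holds because \(F_r(z+Z)\) is Bernoulli with mean \(u\). I expect this step to be the main obstacle: one must check differentiation under the integral, the vanishing of the boundary terms, and finiteness of \(\int\sigma^2\psi_m\). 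All three should follow from \(\psi_m\in C^1(\T)\) having support of width \(2/m<1\) and from \cref{lem:kernel-identities}.

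\emph{Gradient estimate.} By the chain rule \(\partial_jG_r=P'(u)\partial_ju_r=6u(1-u)\partial_ju_r\), so
\[
 \Bigl(\sum_j|\partial_jG_r(z)|^2\Bigr)^{1/2}\le6\sqrt{70}\,m\,(u(1-u))^{3/2}.
\]
When \(F_r(z)=0\) we have \(e_r=u\), and the ratio \((u(1-u))^{3/2}/u=u^{1/2}(1-u)^{3/2}\) is maximized at \(u=1/4\), where it equals \(3\sqrt3/16\). This gives \((\sum_j|\partial_jG_r|^2)^{1/2}\le\frac{18\sqrt{210}}{16}\,m\,e_r(z)=\frac{9\sqrt{210}}8\,m\,e_r(z)\). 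Finally, Minkowski's inequality in \(\ell_2^n\) gives
\[
 \Bigl(\sum_j\norm{\partial_jG}_\lambda^2\Bigr)^{1/2}=\Bigl\|\sum_r\lambda_r(|\partial_jG_r|)_j\Bigr\|_{\ell_2}\le\sum_r\lambda_r\Bigl(\sum_j|\partial_jG_r|^2\Bigr)^{1/2}.
\]
Summing the per-coordinate bounds over \(r\) then yields \eqref{eq:smoothing-gradient}.
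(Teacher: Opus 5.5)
Your proposal is correct and follows essentially the same route as the paper: the endpoint bound \(P(s)\le\frac98 s\) (with \(1-P(s)=P(1-s)\)), differentiation of the convolution to get the score identity, Bessel's inequality against the orthogonal scores of variance \(70m^2\), the maximization of \(u^{1/2}(1-u)^{3/2}\) at \(u=1/4\), and Minkowski's inequality in \(\ell_2^n\). The only difference is cosmetic: you split into the cases \(F_r(z)\in\{0,1\}\) at the start, whereas the paper first bounds the gradient by a multiple of \(\min\{u,1-u\}\) and then uses \(\min\{u,1-u\}\le|u-F_r(z)|\).
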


\begin{proof}
Let
\[
 k_m(w)=\prod_{j=1}^n\psi_m(w_j),
 \qquad w=(w_1,\ldots,w_n)\in\T^n,
\]
be the density of \(Z\). For each \(r=1,\ldots,N\), the function \(u_r\)
is the convolution of the binary coordinate \(F_r\) with \(k_m\):
\[
 u_r(z)=\int_{\T^n}F_r(w)k_m(w-z)\,dw.
\]
Since \(F_r\) is bounded and \(k_m\in C^1(\T^n)\), we may
differentiate under the integral. Hence \(u_r\in C^1(\T^n)\) for
\(r=1,\ldots,N\). Since \(P\) is a polynomial, it follows that
\[
 G\in C^1(\T^n;\ell_1^N(\lambda)).
\]

We first estimate how far \(G\) is from the original binary map \(F\).
For \(b\in\{0,1\}\) and \(0\leq s\leq1\),
\begin{equation}\label{eq:cubic-endpoint}
 |b-P(s)|\leq\frac98|b-s|.
\end{equation}
Indeed, when \(b=0\),
\[
 \frac98-(3s-2s^2)=2\left(s-\frac34\right)^2\geq0,
\]
and the case \(b=1\) follows from \(1-P(s)=P(1-s)\). Since
\(F_r(z)\in\{0,1\}\),
\[
 |F_r(z)-u_r(z)|
 =\E_Z|F_r(z)-F_r(z+Z)|.
\]
Therefore
\begin{align*}
 \norm{F(z)-G(z)}_{\lambda}
 &\leq \frac98\sum_{r=1}^N\lambda_r|F_r(z)-u_r(z)|\\
 &=\frac98R(z),
\end{align*}
which proves \eqref{eq:smoothing-approximation}.

We now use the Fisher information of \(\psi_m\) to control the
derivatives of \(u_r\). Define the score function
\[
 S_m(t)=-\frac{\psi_m'(t)}{\psi_m(t)}
 \qquad\text{on }\{\psi_m>0\},
\]
and set \(S_m(t)=0\) elsewhere. Since
\(S_m\psi_m=-\psi_m'\) almost everywhere, differentiating the
convolution and changing variables gives
\begin{equation}\label{eq:score-identity}
 \partial_j u_r(z)
 =\E_Z\bigl[F_r(z+Z)S_m(Z_j)\bigr].
\end{equation}

The score has mean zero, while its second moment is exactly the
rescaled Fisher information:
\[
 \E_Z S_m(Z_j)=0,
 \qquad
 \E_Z S_m(Z_j)^2=70m^2.
\]
We may therefore subtract the mean \(u_r(z)\) from \(F_r(z+Z)\) in
\eqref{eq:score-identity} and write
\[
 \partial_j u_r(z)
 =\E_Z\bigl[(F_r(z+Z)-u_r(z))S_m(Z_j)\bigr].
\]

Because the coordinates of \(Z\) are independent, the normalized
scores
\[
 \frac{S_m(Z_1)}{m\sqrt{70}},\ldots,
 \frac{S_m(Z_n)}{m\sqrt{70}}
\]
form an orthonormal family in \(L_2\). On the other hand,
\(F_r(z+Z)\) is a Bernoulli random variable with mean \(u_r(z)\), so
\[
 \E_Z|F_r(z+Z)-u_r(z)|^2
 =u_r(z)(1-u_r(z)).
\]
Bessel's inequality now gives
\begin{equation}\label{eq:Bessel-u}
 \left(\sum_{j=1}^n|\partial_j u_r(z)|^2\right)^{1/2}
 \leq
 m\sqrt{70}\sqrt{u_r(z)(1-u_r(z))}.
\end{equation}

The estimate \eqref{eq:Bessel-u} leaves us with the square root of a
Bernoulli variance. This is precisely the quantity that the cubic
\(P\) is designed to absorb. Indeed, since \(P'(s)=6s(1-s)\),
\begin{equation}\label{eq:cubic-key-bound}
 P'(s)\sqrt{s(1-s)}
 \leq
 \frac{9\sqrt3}{8}\min\{s,1-s\},
 \qquad 0\leq s\leq1.
\end{equation}
For \(0<s\leq1/2\), this amounts to
\[
 6s^{1/2}(1-s)^{3/2}\leq\frac{9\sqrt3}{8},
\]
whose left-hand side attains its maximum at \(s=1/4\).
The case \(1/2\leq s<1\) follows by symmetry.

Combining \eqref{eq:Bessel-u}, \eqref{eq:cubic-key-bound}, and the chain
rule, we obtain
\[
 \left(\sum_{j=1}^n|\partial_jP(u_r(z))|^2\right)^{1/2}
 \leq \frac{9\sqrt{210}}8\,m\min\{u_r(z),1-u_r(z)\}.
\]
We now sum over \(r\). Minkowski's inequality in \(\ell_2^n\) gives
\begin{align*}
 \left(\sum_{j=1}^n\norm{\partial_jG(z)}_{\lambda}^2\right)^{1/2}
 &\leq\sum_{r=1}^N\lambda_r
 \left(\sum_{j=1}^n|\partial_jP(u_r(z))|^2\right)^{1/2}\\
 &\leq \frac{9\sqrt{210}}8\,m\sum_{r=1}^N\lambda_r
 \min\{u_r(z),1-u_r(z)\}.
\end{align*}
Finally, the binary value \(F_r(z)\in\{0,1\}\) gives
\[
 \min\{u_r(z),1-u_r(z)\}
 \leq |u_r(z)-F_r(z)|
 =\E_Z|F_r(z+Z)-F_r(z)|.
\]
After summing with the weights \(\lambda_r\), the right-hand side is
exactly \(R(z)\). This proves \eqref{eq:smoothing-gradient}.
\end{proof}

\begin{remark}
The smoothing used here differs from the fixed linear convolution
framework of \cite{GiladiMendelNaor2011}. We first represent the finite
\(L_1\)-metric by binary cut coordinates, smooth each coordinate, and
then apply the nonlinear map \(P\). Thus the smoothing depends on the
chosen cut representation and is nonlinear before the coordinates are
reassembled. We note that the lower bound in
\cite[Proposition~4.1]{GiladiMendelNaor2011} already uses an
\(L_1\)-valued test map.
\end{remark}

\section{Proofs of the main results}\label{sec:main-proofs}

We first prove the upper bound in the torus inequality, then establish
the matching lower bounds. The metric cotype consequence is derived at
the end of the section.

\subsection{The upper bound}\label{sec:upper}

\begin{proof}[Proof of the upper bound in \cref{thm:optimal}]
We pass to the continuous torus so that long displacements can be
estimated along coordinate paths. The smoothing error and the derivatives
will both be controlled by the same local average.

Fix \(1\leq p<\infty\), an even integer \(m\geq4\), and \(n\in\mathbb N\).
Let \(f:\Z_m^n\to L_1(\Omega,\mu)\), where \((\Omega,\mu)\) is a measure
space. Using the realification in \cref{sec:cuts}, we may replace a complex
\(L_1\) space by a real one without changing any distances.
The constant case is immediate, so we assume that \(f\) is
nonconstant. Taking only the cuts of positive weight in
\cref{lem:finite-cuts}, we may choose \(N\in\mathbb N\),
cuts \(A_1,\ldots,A_N\subseteq\Z_m^n\), and weights
\(\lambda=(\lambda_1,\ldots,\lambda_N)\in(0,\infty)^N\).
For \(a\in\R^N\), write
\(\norm{a}_{\lambda}=\sum_{r=1}^N\lambda_r|a_r|\).
We use normalized Haar measure on \(\T^n\) and the cells
\[
 Q_x=\prod_{j=1}^n\left[\frac{x_j}{m},\frac{x_j+1}{m}\right)
 \subseteq\T^n\qquad(x\in\Z_m^n).
\]
Define \(F:\T^n\to\{0,1\}^N\subseteq\ell_1^N(\lambda)\) by
\(F(z)=(\1_{A_r}(x))_{r=1}^N\) for \(z\in Q_x\).
The cut representation gives
\begin{equation}\label{eq:cell-distance-upper}
 \norm{F(z)-F(w)}_{\lambda}=\norm{f(x)-f(y)}_1
 \qquad(z\in Q_x,\ w\in Q_y).
\end{equation}

We now use the smoothed map \(G\in C^1(\T^n;\ell_1^N(\lambda))\)
and the \(\T^n\)-valued random increment \(Z\) from \cref{sec:smoothing}.
With \(R:\T^n\to[0,\infty)\) given by
\[
 R(z)=\E_Z\norm{F(z+Z)-F(z)}_{\lambda},
\]
\cref{prop:smoothing} gives, for every \(z\in\T^n\),
\begin{align*}
 \norm{F(z)-G(z)}_{\lambda}&\leq\frac98R(z),\\
 \left(\sum_{j=1}^n\norm{\partial_jG(z)}_{\lambda}^2\right)^{1/2}
 &\leq\frac{9\sqrt{210}}8\,mR(z).
\end{align*}

We estimate a long displacement of \(F\) by following \(G\) along a
coordinate path. For \(z\in\T^n\) and \(j=1,\ldots,n\), the fundamental
theorem of calculus and the approximation bound give
\[
 \begin{split}
 \norm{F(z+\tfrac12e_j)-F(z)}_{\lambda}
 &\leq\int_0^{1/2}
 \norm{\partial_jG(z+te_j)}_{\lambda}\,dt\\
 &\quad+\frac98\bigl(R(z)+R(z+\tfrac12e_j)\bigr).
 \end{split}
\]
The dimension factor enters when we pass from a square sum of derivatives
to a \(p\)-sum. The comparison of \(\ell_p^n\) and \(\ell_2^n\) norms gives
\[
 \begin{split}
 \left(\sum_{j=1}^n\norm{\partial_jG(z)}_{\lambda}^p\right)^{1/p}
 &\leq n^{(1/p-1/2)_+}
 \left(\sum_{j=1}^n\norm{\partial_jG(z)}_{\lambda}^2\right)^{1/2}\\
 &\leq\frac{9\sqrt{210}}8\,
 m n^{(1/p-1/2)_+}R(z).
 \end{split}
\]
Minkowski's integral inequality and translation invariance on \(\T^n\)
now give
\begin{align}
 &\left(\sum_{j=1}^n\int_{\T^n}
 \norm{F(z+\tfrac12e_j)-F(z)}_{\lambda}^p\,dz\right)^{1/p}
 \notag\\
 &\quad\leq\frac12
 \left(\sum_{j=1}^n\int_{\T^n}
 \norm{\partial_jG(z)}_{\lambda}^p\,dz\right)^{1/p}
 +\frac94n^{1/p}\norm{R}_{L_p(\T^n)}
 \notag\\
 &\quad\leq
 \left(\frac{9\sqrt{210}}{16}\,
 m n^{(1/p-1/2)_+}+\frac94n^{1/p}\right)
 \norm{R}_{L_p(\T^n)}.
 \label{eq:continuous-upper}
\end{align}

We now recover the discrete local average. Choose \(z\in\T^n\) uniformly
and independently of \(Z\), and let \(x\in\Z_m^n\) be its cell index.
Conditional on \(x\), the point \(z\) is uniform in \(Q_x\).
By \cref{lem:ternary-rounding}, applied coordinatewise at scale \(1/m\),
we have \(z+Z\in Q_{x+\varepsilon}\), where
\(\varepsilon\in\{-1,0,1\}^n\) is uniform and independent of \(x\).
Since \(x\) is also uniform, \eqref{eq:cell-distance-upper} gives
\begin{equation}\label{eq:local-exact-identification}
 \int_{\T^n}\E_Z\norm{F(z+Z)-F(z)}_{\lambda}^p\,dz
 =\E_{x,\varepsilon}\norm{f(x+\varepsilon)-f(x)}_1^p.
\end{equation}
Jensen's inequality gives
\begin{equation}\label{eq:R-Jensen}
 \norm{R}_{L_p(\T^n)}^p
 \leq\E_{x,\varepsilon}\norm{f(x+\varepsilon)-f(x)}_1^p.
\end{equation}

For the long increments, translation by \(e_j/2\) sends \(Q_x\) onto
\(Q_{x+(m/2)e_j}\), since \(m\) is even. Each cell has measure \(m^{-n}\), so
\begin{equation}\label{eq:long-exact-identification}
 \int_{\T^n}\norm{F(z+\tfrac12e_j)-F(z)}_{\lambda}^p\,dz
 =\E_x\norm{f(x+\tfrac m2e_j)-f(x)}_1^p
 \qquad(j=1,\ldots,n).
\end{equation}
With these identifications, \eqref{eq:continuous-upper} becomes
\begin{align}
 &\left(\sum_{j=1}^n\E_x
 \norm{f(x+\tfrac m2e_j)-f(x)}_1^p\right)^{1/p}
 \notag\\
 &\qquad\leq
 \left(\frac{9\sqrt{210}}{16}\,
 m n^{(1/p-1/2)_+}+\frac94n^{1/p}\right)
 \left(\E_{x,\varepsilon}
 \norm{f(x+\varepsilon)-f(x)}_1^p\right)^{1/p}.
\end{align}
Both \(m n^{(1/p-1/2)_+}\) and \(n^{1/p}\) are at most
\(\bigl(m^pn^{(1-p/2)_+}+n\bigr)^{1/p}\), while
\(9\sqrt{210}/16+9/4<11\). Raising the preceding inequality to the
power \(p\) yields
\[
 \sum_{j=1}^n\E_x\norm{f(x+\tfrac m2e_j)-f(x)}_1^p
 \leq11^p\bigl(m^pn^{(1-p/2)_+}+n\bigr)
 \E_{x,\varepsilon}\norm{f(x+\varepsilon)-f(x)}_1^p.
\]
This proves the upper bound.
\end{proof}

\subsection{Lower-bound constructions}\label{sec:lower}

We use Euclidean cycle maps for the term involving \(m\). For the term
\(n\), we use a binary-valued map that changes under every half-period
coordinate shift.

Fix \(1\leq p<\infty\), an even integer \(m\geq4\), and
\(n\in\mathbb N\). Throughout this subsection, \(x\) and
\(\varepsilon\) are independent and uniform on \(\Z_m^n\) and
\(\{-1,0,1\}^n\), respectively. We obtain lower bounds for
\(\mathsf C_p(m,n;L_1)\) by evaluating the ratio in \eqref{eq:def-Cpmn}
on these maps.

\begin{lemma}\label{prop:Hilbert-lower}
\[
 \mathsf C_p(m,n;L_1)
 \geq \pi^{-p}m^pn^{(1-p/2)_+}.
\]
\end{lemma}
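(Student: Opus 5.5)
The plan is to evaluate the ratio in \eqref{eq:def-Cpmn} on explicit Hilbert-space-valued ``cycle maps''. These maps transfer to \(L_1\) because Euclidean space embeds isometrically into \(L_1\) up to a scalar factor, and the ratio in \eqref{eq:def-Cpmn} is invariant under rescaling \(f\). Concretely, real \(\ell_2^d\) embeds into \(L_1\) via \(y\mapsto c\langle y,g\rangle\), where \(g\) is a standard Gaussian vector and \(c=\sqrt{\pi/2}\), so that \(\norm{c\langle y,g\rangle}_1=\norm{y}_2\). We identify \(\C^n\) with \(\R^{2n}\). It therefore suffices to produce maps \(f:\Z_m^n\to\C^n\) with the Euclidean norm for which the ratio is at least \(\pi^{-p}m^pn^{(1-p/2)_+}\). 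We treat \(1\le p\le 2\) and \(p\ge2\) separately.

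\emph{Case \(1\leq p\leq 2\).} Take \(f(x)=\sum_{j=1}^n e^{2\pi i x_j/m}e_j\).

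Numerator. Since \(m\) is even, \(f(x+\frac m2 e_j)-f(x)=-2e^{2\pi i x_j/m}e_j\). So every long increment has norm exactly \(2\), and the numerator equals \(n2^p\).

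Denominator. We have \(\norm{f(x+\varepsilon)-f(x)}_2^2=\sum_j|e^{2\pi i\varepsilon_j/m}-1|^2=4\sin^2(\pi/m)\,k\), where \(k=\#\{j:\varepsilon_j\neq0\}\). Using \(\sin t\le t\), this norm is at most \((2\pi/m)\sqrt k\). Since \(p/2\le1\), Jensen's inequality gives \(\E k^{p/2}\le(\E k)^{p/2}=(2n/3)^{p/2}\le n^{p/2}\). Hence the denominator is at most \((2\pi/m)^pn^{p/2}\).

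The ratio is therefore at least \(n2^p m^p/((2\pi)^pn^{p/2})=\pi^{-p}m^pn^{1-p/2}\), as required.

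\emph{Case \(p\geq2\).} Here only a single cycle is needed: take \(f(x)=e^{2\pi i x_1/m}\in\C\cong\R^2\).

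Numerator. The \(j=1\) term equals \(2^p\), and the other terms are nonnegative.

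Denominator. It equals \(\frac23|e^{2\pi i/m}-1|^p\le\frac23(2\pi/m)^p\le(2\pi/m)^p\).

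The ratio is therefore at least \(\pi^{-p}m^p\), which equals \(\pi^{-p}m^pn^{(1-p/2)_+}\) in this case.

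No step is a real obstacle. The only points needing care are, first, the choice of Jensen's inequality in the direction valid for \(p/2\le1\), and second, switching to the one-dimensional map for \(p\ge2\), where summing over all coordinates would cost a factor \(n^{1-p/2}<1\).
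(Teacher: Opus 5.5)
Your proposal is correct and follows the paper's proof essentially step for step. You use the same Gaussian isometric embedding of \(\ell_2^d\) into \(L_1\), the product of cycle maps with Jensen's inequality for \(1\le p\le 2\), and the single-coordinate cycle map for \(p\ge 2\); the only difference is normalization. The paper scales the cycle by \(1/(2\sin(\pi/m))\) so that unit steps have length \(1\) and applies \(1/\sin(\pi/m)\ge m/\pi\) in the numerator, whereas you keep long steps of length \(2\) and apply the equivalent bound \(\sin t\le t\) in the denominator.
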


\begin{proof}
We begin with Euclidean-valued maps and realize their distances in
\(L_1\). For \(d\in\mathbb N\), let \(\gamma_d\) be the standard
Gaussian probability measure on \(\R^d\). The linear map
\[
 \ell_2^d\longrightarrow L_1(\R^d,\gamma_d),
 \qquad
 v\longmapsto
 \bigl[\omega\longmapsto\sqrt{\pi/2}\,\ip{v}{\omega}\bigr]
\]
is an isometry, since
\[
 \sqrt{\frac\pi2}\int_{\R^d}\abs{\ip{v}{\omega}}\,d\gamma_d(\omega)
 =\norm{v}_2
 \qquad(v\in\ell_2^d).
\]
We may thus compute the ratios using Euclidean norms.

To separate half-period shifts from unit shifts, define
\(u_m:\Z_m\to\ell_2^2\) by
\begin{equation}\label{eq:cycle-map}
 u_m(k)=\frac1{2\sin(\pi/m)}
 \left(\cos\frac{2\pi k}{m},\sin\frac{2\pi k}{m}\right)
 \qquad(k\in\Z_m).
\end{equation}
This normalization gives
\begin{equation}\label{eq:cycle-distances}
 \norm{u_m(k\pm1)-u_m(k)}_2=1,
 \qquad
 \norm{u_m(k+\tfrac m2)-u_m(k)}_2
 =\frac1{\sin(\pi/m)}\geq\frac m\pi.
\end{equation}

For \(1\leq p\leq2\), define \(U:\Z_m^n\to\ell_2^{2n}\) by
\[
 U(x)=\bigl(u_m(x_1),\ldots,u_m(x_n)\bigr).
\]
The coordinate pairs are orthogonal, so
\begin{align*}
 \sum_{j=1}^n\E_x\norm{U(x+\tfrac m2e_j)-U(x)}_2^p
 &=\frac{n}{\sin^p(\pi/m)}\geq n\left(\frac m\pi\right)^p,\\
 \norm{U(x+\varepsilon)-U(x)}_2^2
 &=\sum_{j=1}^n\varepsilon_j^2.
\end{align*}
Since \(p/2\leq1\), Jensen's inequality gives
\[
 \E_{x,\varepsilon}\norm{U(x+\varepsilon)-U(x)}_2^p
 \leq\left(\sum_{j=1}^n\E_\varepsilon\varepsilon_j^2\right)^{p/2}
 =\left(\frac{2n}{3}\right)^{p/2}\leq n^{p/2}.
\]
The ratio is at least \(\pi^{-p}m^pn^{1-p/2}\), as required.

For \(2\leq p<\infty\), use the one-coordinate map
\(x\mapsto u_m(x_1)\) from \(\Z_m^n\) to \(\ell_2^2\).
The numerator is \(\sin^{-p}(\pi/m)\). The denominator is \(2/3\),
since only \(\varepsilon_1=\pm1\) contributes, with distance \(1\).
Its isometric image in \(L_1\) gives
\[
 \mathsf C_p(m,n;L_1)
 \geq\frac{3}{2\sin^p(\pi/m)}\geq\pi^{-p}m^p.\qedhere
\]
\end{proof}

\begin{lemma}\label{prop:n-lower}
\[
 \mathsf C_p(m,n;L_1)\geq2n.
\]
\end{lemma}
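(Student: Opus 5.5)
The plan is to test the ratio in \eqref{eq:def-Cpmn} on an \emph{antiperiodic} sign map. Such a map flips under every half-period coordinate shift, but changes under a uniform increment \(\varepsilon\in\{-1,0,1\}^n\) with probability at most \(1/2\). Since the map is real-valued, it is automatically a map into \(L_1\) (take \(L_1\) of a one-point probability space), so no embedding step is needed.

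Concretely, I will define \(s:\Z_m\to\{-1,1\}\) by \(s(k)=1\) for \(k\in\{0,\ldots,\frac m2-1\}\) and \(s(k)=-1\) for \(k\in\{\frac m2,\ldots,m-1\}\). Since \(m\) is even, \(s(k+\frac m2)=-s(k)\). Then I set
\[
 g(x)=\prod_{j=1}^n s(x_j)\qquad(x\in\Z_m^n).
\]
For each \(j\) we get \(g(x+\frac m2e_j)=-g(x)\), so \(|g(x+\frac m2e_j)-g(x)|=2\) for every \(x\). Hence the numerator of \eqref{eq:def-Cpmn} equals \(n2^p\) exactly.

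For the denominator, \(|g(x+\varepsilon)-g(x)|\in\{0,2\}\), so it equals \(2^p\,\mathbb P\bigl(g(x+\varepsilon)\neq g(x)\bigr)\). Next, \(g(x+\varepsilon)\ne g(x)\) exactly when an odd number of coordinates \(j\) satisfy \(s(x_j+\varepsilon_j)\neq s(x_j)\). The pairs \((x_j,\varepsilon_j)\) are independent across \(j\), so these crossing events are independent.

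Each crossing event has the same probability \(q\). A crossing occurs exactly when \(x_j\in\{\frac m2-1,m-1\}\) and \(\varepsilon_j=1\), or \(x_j\in\{0,\frac m2\}\) and \(\varepsilon_j=-1\). Therefore
\[
 q=2\cdot\frac2m\cdot\frac13=\frac4{3m}\leq\frac13,
\]
using \(m\geq4\). The standard parity computation for independent events of probability \(q\) gives
\[
 \mathbb P(\text{odd number of crossings})=\frac{1-(1-2q)^n}{2}\leq\frac12,
\]
because \(0\leq 1-2q\leq1\). So the denominator is at most \(2^{p}/2\), it is nonzero, and the ratio is at least \(n2^p/(2^{p-1})=2n\).

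There is no real obstacle here. The only step needing care is the parity bound, namely checking that \(1-2q\in[0,1]\) so that \((1-2q)^n\geq0\); this uses \(q\le 1/3\), which is where the hypothesis \(m\geq4\) enters.
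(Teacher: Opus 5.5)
Your proposal is correct and is essentially the paper's proof. The paper uses the same half-cycle sign \(s_m\), but with the \(\{0,1\}\)-valued map \(\frac12\bigl(1-\prod_j s_m(x_j)\bigr)\), an affine rescaling of your \(g\) that leaves the ratio unchanged; its correlation \(\rho_m=1-\frac{8}{3m}\) is exactly your \(1-2q\), and the nonvanishing of the denominator follows from \(q>0\), i.e.\ \((1-2q)^n<1\), which you could state explicitly.
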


\begin{proof}
We use a sign that is constant on each half-cycle. With representatives
\(k\in\{0,\ldots,m-1\}\), define \(s_m:\Z_m\to\{-1,1\}\) by
\[
 s_m(k)=
 \begin{cases}
  1,&0\leq k<m/2,\\
  -1,&m/2\leq k<m.
 \end{cases}
\]
A half-period shift reverses this sign:
\begin{equation}\label{eq:antiperiodic-sign}
 s_m(k+\tfrac m2)=-s_m(k)
 \qquad(k\in\Z_m).
\end{equation}
We now define \(f:\Z_m^n\to\{0,1\}\subseteq\R\) by
\[
 f(x)=\frac12\left(1-\prod_{j=1}^ns_m(x_j)\right).
\]
We regard \(\R\) as \(L_1\) of a one-point probability space.
The identity \(f(x+\tfrac m2e_j)=1-f(x)\) shows that every long
increment has length \(1\), so
\[
 \sum_{j=1}^n\E_x\abs{f(x+\tfrac m2e_j)-f(x)}^p=n.
\]

To compute the local average, let \(k\) and \(\eta\) be independent
and uniform on \(\Z_m\) and \(\{-1,0,1\}\), respectively, and set
\[
 \rho_m=\E_{k,\eta}s_m(k+\eta)s_m(k).
\]
Exactly two cyclic edges change the sign, so
\[
 \E_k s_m(k+1)s_m(k)=1-\frac4m.
\]
The shift \(-1\) has the same average by translation invariance.
Including the zero increment gives
\begin{equation}\label{eq:rho-bound}
 \rho_m=\frac13\left(1+2\left(1-\frac4m\right)\right)
 =1-\frac8{3m}\in\left[\frac13,1\right).
\end{equation}

Since \(f\) takes only the values \(0\) and \(1\),
\[
 \abs{f(x+\varepsilon)-f(x)}^p
 =\frac12\left(1-\prod_{j=1}^n
 s_m(x_j+\varepsilon_j)s_m(x_j)\right).
\]
The pairs \((x_j,\varepsilon_j)\) are independent across coordinates,
so
\[
 \E_{x,\varepsilon}\abs{f(x+\varepsilon)-f(x)}^p
 =\frac{1-\rho_m^n}{2}\in\left(0,\frac12\right).
\]
The defining ratio is at least \(2n\).
\end{proof}

\begin{proof}[Proof of the lower bound in \cref{thm:optimal}]
The two examples give
\begin{align*}
 \mathsf C_p(m,n;L_1)
 &\geq\max\left\{\pi^{-p}m^pn^{(1-p/2)_+},\,2n\right\}\\
 &\geq\frac12\left(\pi^{-p}m^pn^{(1-p/2)_+}+2n\right)\\
 &\geq\frac{m^pn^{(1-p/2)_+}+n}{(2\pi)^p}.
\end{align*}
The last inequality uses \(\pi^{-p}/2\geq(2\pi)^{-p}\), which holds
for \(p\geq1\).
\end{proof}

\subsection{Sharp metric cotype for \texorpdfstring{\(L_1\)}{L1}}
\label{sec:consequences}

\begin{proof}[Proof of \cref{cor:sharp-metric-cotype}]
We choose the torus scale so that both terms in the upper bound have
the metric cotype normalization. Fix \(2\leq q<\infty\),
\(1\leq p\leq q\), \(n\in\mathbb N\), and an even integer \(m\geq4\)
with \(n\leq m^q\). Let \((\Omega,\mu)\) be an arbitrary measure
space and \(f:\Z_m^n\to L_1(\Omega,\mu)\) a mapping. We take \(x\) and
\(\varepsilon\) to be independent and uniform on \(\Z_m^n\) and
\(\{-1,0,1\}^n\), respectively, and write
\(e_1,\ldots,e_n\in\Z_m^n\) for the standard coordinate vectors.

The assumptions \(q\geq2\), \(p\leq q\), and \(n^{1/q}\leq m\) give
\[
 n^{(1/p-1/2)_+}\leq n^{1/p-1/q},
 \qquad
 n^{1/p}=n^{1/q}n^{1/p-1/q}\leq m n^{1/p-1/q}.
\]
Using these comparisons in the \(p\)-th root estimate
\eqref{eq:upper-root}, we obtain
\begin{align*}
 &\left(\sum_{j=1}^n\E_x
 \norm{f(x+\tfrac m2e_j)-f(x)}_1^p\right)^{1/p}\\
 &\qquad\leq
 \left(\frac{9\sqrt{210}}{16}\,m n^{(1/p-1/2)_+}
       +\frac94 n^{1/p}\right)
 \left(\E_{x,\varepsilon}\norm{f(x+\varepsilon)-f(x)}_1^p\right)^{1/p}\\
 &\qquad\leq
 11m n^{1/p-1/q}
 \left(\E_{x,\varepsilon}\norm{f(x+\varepsilon)-f(x)}_1^p\right)^{1/p},
\end{align*}
since \(9\sqrt{210}/16+9/4<11\). Taking \(p\)-th powers proves
\eqref{eq:sharp-metric-cotype-L1}.

Recall that \(m_q^{(p)}(L_1;n,11)\) is the smallest positive even
integer \(m\) for which this inequality holds for every \(L_1\)-valued
map on \(\Z_m^n\). We now choose \(m\) to be the least even integer
not smaller than \(\max\{4,n^{1/q}\}\). The preceding estimate applies
at this scale, and \(n\geq1\) gives
\[
 m_q^{(p)}(L_1;n,11)\leq m
 <\max\{4,n^{1/q}\}+2\leq6n^{1/q}.
\]
At \(p=q=2\), this yields
\(m_2(L_1;n,11)=m_2^{(2)}(L_1;n,11)\leq6\sqrt n\).
The universal lower bound \eqref{eq:universal-scale-lower-bound}
reads \(m_q^{(p)}(L_1;n,11)\geq n^{1/q}/11\), so the order
\(n^{1/q}\) is sharp.
\end{proof}

\end{document}